\documentclass{article}

\usepackage{indentfirst, bbm}
\usepackage{amsthm}
\usepackage{amsmath}
\usepackage{amssymb}
\usepackage{amsmath, amsthm, bbm}
\usepackage{framed, fullpage}

\newtheorem{theo}{Theorem}

\newtheorem{lemma}{Lemma}[section]

\newtheorem{prop}[lemma]{Proposition}

\newtheorem{claim}[lemma]{Claim}











\date{}

\title{A quantitative Lov\'asz criterion for Property B}

\author{Asaf Ferber \thanks{Department of Mathematics, MIT, Cambridge, MA, USA. Email: ferbera@mit.edu. Supported in part by NSF grant 6935855.}
\and Asaf Shapira \thanks{
School of Mathematics, Tel Aviv University, Tel Aviv 69978, Israel.
Email: asafico$@$tau.ac.il. Supported in part by ISF Grant 1028/16 and ERC Starting Grant 633509.}
}

\begin{document}

\maketitle
\begin{abstract}


A well known observation of Lov\'asz is that if a hypergraph is not $2$-colorable, then at least one pair of its edges intersect at a single vertex.
In this short paper we consider the quantitative version of Lov\'asz's criterion. That is, we ask how many pairs
of edges intersecting at a single vertex, should belong to a non $2$-colorable $n$-uniform hypergraph?
Our main result is an {\em exact} answer to this question, which further characterizes all the extremal hypergraphs.
The proof combines Bollob\'as's two families theorem with Pluhar's randomized coloring algorithm.

\end{abstract}

\section{Introduction}\label{sec:intro}

A hypergraph ${\cal H}=(V,E)$ consists of a vertex set $V$ and a set
of edges $E$ where each
$X \in E$ is a subset of $V$. If all edges of ${\cal H}$ have size $n$
then ${\cal H}$
is called an $n$-uniform hypergraph, or $n$-graph for short.
A hypergraph is $2$-colorable if one can assign each vertex $v\in V$
one of two colors, say $Red$/$Blue$, so that each
$X \in E$ contains vertices of both colors. Miller \cite{M}, and later Erd\H{o}s in various papers, referred
to this property as {\em Property B}, after F.~Bernstein \cite{B}
who introduced it in 1907. Since deciding if a hypergraph is
$2$-colorable is $NP$-hard one cannot hope to find a simple
characterization of all $2$-colorable hypergraphs. Instead, one
looks for general sufficient/necessary conditions for
having this property. For example, a famous result of Seymour \cite{S}
states that if ${\cal H}$ is not $2$-colorable then
$|E| \geq |V|$. Probably the most well studied question of this type
asks for the smallest number of edges in an $n$-graph that
is not $2$-colorable. The study of this quantity, denote $m(n)$, was
popularized by Erd\H{o}s, see \cite{AS} for a comprehensive treatment.
Despite much effort by many researchers, even the asymptotic value of $m(n)$ has not been determined yet.

A pair of edges $X,Y \in E({\cal H})$ is \emph{simple} if $|X\cap Y|=1$. Let $m_2({\cal H})$ denote the number of ordered simple pairs
of edges of ${\cal H}$. A well known observation of Lov\'asz \cite{L} states that if ${\cal H}$ is not $2$-colorable then $m_2({\cal H}) >0$. Despite its simplicity, this observation underlies the best known bounds for $m(n)$, see
\cite{CK,Pluhar}. It is natural to ask if one can obtain a quantitative version of Lov\'asz's observation, that is,
estimate how small can $m_2({\cal H})$ be in an $n$-graph not satisfying property $B$?
Our main result in this paper states that (somewhat surprisingly), one can give an exact answer to the above extremal question
as well as characterize the extremal $n$-graphs.

Let $K_{2n-1}^n$ denote the complete $n$-graph on $2n-1$ vertices. It is easy to see that $K_{2n-1}^n$ is not $2$-colorable
and that $m_2(K_{2n-1}^n)=n\cdot\binom{2n-1}{n}$. We first observe that this simple upper bound is tight.

\begin{prop}\label{prop:main}
If an $n$-graph is not $2$-colorable then $m_2({\cal H}) \geq n\cdot\binom{2n-1}{n}$.
\end{prop}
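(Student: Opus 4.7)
The plan is to use Pluh\'ar's random-permutation trick directly. Sample a uniformly random permutation $\pi$ of $V(\mathcal{H})$, and for each edge $X$ let $f(X)$ and $l(X)$ denote the $\pi$-first and $\pi$-last vertices of $X$. Consider the candidate coloring that paints every vertex in $F := \{f(X) : X \in E(\mathcal{H})\}$ Red, every vertex in $L := \{l(X) : X \in E(\mathcal{H})\}$ Blue, and the rest arbitrarily. If $F \cap L = \emptyset$, this is a proper $2$-coloring since each edge $X$ contains both $f(X) \in F$ and $l(X) \in L$. Hence non-$2$-colorability of $\mathcal{H}$ forces $F \cap L \neq \emptyset$ for \emph{every} $\pi$.

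The key structural observation is that if $v = f(X) = l(Y)$, then $X, Y$ must be a simple pair meeting exactly at $v$: every vertex of $X \setminus \{v\}$ lies $\pi$-after $v$, while every vertex of $Y \setminus \{v\}$ lies $\pi$-before $v$, so the two sets are disjoint and $X \cap Y = \{v\}$. Conversely, for each ordered simple pair $(X,Y)$ with $X \cap Y = \{v\}$, the event $\{v = f(X) = l(Y)\}$ requires $v$ to occupy position $n$ in the $\pi$-ordering restricted to the $2n-1$ vertices of $X \cup Y$, with $Y \setminus \{v\}$ before and $X \setminus \{v\}$ after. A quick count gives probability $\frac{(n-1)!^2}{(2n-1)!} = \frac{1}{n\binom{2n-1}{n}}$.

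A union bound over all ordered simple pairs then yields
\[
1 \;\le\; \Pr[F \cap L \neq \emptyset] \;\le\; \sum_{(X,Y)\ \text{ordered simple}} \Pr\bigl[v = f(X) = l(Y)\bigr] \;=\; \frac{m_2(\mathcal{H})}{n\binom{2n-1}{n}},
\]
which rearranges to the claimed bound. The only mildly delicate step is the identification of conflicts in $F \cap L$ with simple pairs; the probability computation and the union bound are routine. Note that Bollob\'as's two families theorem is not required here --- it presumably enters only for the extremal characterization promised in the abstract, where one needs to extract rigidity from the case of equality in the union bound.
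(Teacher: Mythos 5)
Your proof is correct. It shares the paper's skeleton exactly: a uniformly random permutation, a union bound over ordered simple pairs, and the identical probability $\frac{(n-1)!^2}{(2n-1)!}=\frac{1}{n\binom{2n-1}{n}}$ for the relevant event, which is precisely the paper's notion of $\pi$ \emph{separating} a simple pair (with the roles of $X$ and $Y$ swapped, which is immaterial since $m_2$ counts ordered pairs). The one genuine difference is the lemma that turns ``no separated pair'' into a proper coloring: the paper runs Pluh\'ar's greedy algorithm $\text{Col}({\cal H},\pi)$ and argues that a monochromatic Red edge forces a separated simple pair, whereas you write down the explicit coloring ``$\pi$-first vertices of edges Red, $\pi$-last vertices Blue, the rest arbitrary'' and observe that it is well defined and proper unless some vertex is simultaneously first in one edge and last in another, which again forces a separated pair. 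Your route is slightly more self-contained for the Proposition on its own; the paper's algorithmic formulation is not wasted, however, since the proof of Theorem \ref{thm:main} reuses the implication ``$\sigma$ separates no simple pair $\Rightarrow$ $\text{Col}({\cal H},\sigma)$ succeeds,'' a role your first/last coloring could fill equally well. Your closing remark that Bollob\'as's theorem enters only for the extremal characterization is accurate, and both your argument and the paper's implicitly assume $n\ge 2$, which is harmless.
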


As with any extremal problem, one would like to know which graphs or hypergraphs are extremal with respect to this problem.
For example, Tur\'an's theorem states that among all $n$-vertex graphs not containing a complete
$t$-vertex subgraph, there is only one graph maximizing the number of edges. In the setting of
our problem, it is easy to see that $K_{2n-1}^n$ is not the only non $2$-colorable $n$-graph satisfying
$m_2({\cal H}) = n\cdot\binom{2n-1}{n}$, since one can take a copy of $K_{2n-1}^n$ and add to it more
vertices and edges without increasing the number of simple pairs. Our main result in this paper
characterizes the extremal $n$-graphs, by showing that this is in fact the only way to construct an $n$-graph meeting the bound of Proposition \ref{prop:main}.

\begin{theo}\label{thm:main}
If a non $2$-colorable $n$-graph ${\cal H}$ satisfies $m_2({\cal H}) = n\cdot\binom{2n-1}{n}$ then it contains a copy of $K_{2n-1}^n$.
\end{theo}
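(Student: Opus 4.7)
My plan is to trace the proof of Proposition~\ref{prop:main} and carefully identify its equality case. Since the abstract flags Bollob\'as's two families theorem and Pluhar's randomized coloring algorithm, I expect Proposition~\ref{prop:main} to be proved as follows: pick a uniformly random permutation $\pi$ of $V({\cal H})$ and run the greedy $2$-coloring that processes vertices in the order of $\pi$, trying Red first and switching to Blue only when Red would complete an all-Red edge. Since ${\cal H}$ is not $2$-colorable, this algorithm fails for every $\pi$, and each failure produces a witness: an ordered simple pair $(X,Y)$ with $X\cap Y=\{v\}$, with $v$ last in $X\cup Y$ under $\pi$, and with $X\setminus\{v\}$ colored Red and $Y\setminus\{v\}$ colored Blue. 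An application of Bollob\'as's two families theorem to the disjoint $(n-1)$-subsets $\bigl(X\setminus\{v\},\,Y\setminus\{v\}\bigr)$ bounds the probability that a fixed ordered simple pair is the failure witness by $\tfrac{1}{n\binom{2n-1}{n}}$; summing over all ordered simple pairs of ${\cal H}$ and using $\Pr[\text{fail}]=1$ then yields Proposition~\ref{prop:main}.

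To establish Theorem~\ref{thm:main}, I would determine when the inequalities above are tight. Under $m_2({\cal H})=n\binom{2n-1}{n}$, the Bollob\'as bound must be saturated by \emph{every} ordered simple pair of ${\cal H}$. Invoking the well-known rigidity in the extremal case of Bollob\'as's two families theorem---equality forces the family of pairs $(A_i,B_i)$ to be exactly the set of all ordered disjoint $(n-1)$-subsets of a single $(2n-2)$-element ground set---this translates back to the following rich local condition on ${\cal H}$: for every ordered simple pair $(X,Y)$ with common vertex $v$, setting $S:=X\cup Y$, every $n$-subset of $S$ that contains $v$ must be an edge of ${\cal H}$. This already produces $\binom{2n-2}{n-1}$ edges of ${\cal H}$ through $v$ lying inside $S$.

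Finally, I would leverage the local condition to upgrade these edges to a full $K^n_{2n-1}$ on $S$. The $\binom{2n-2}{n-1}$ edges through $v$ produced above generate $\binom{2n-2}{n-1}$ ordered simple pairs whose common vertex is $v$, and since the total equals $m_2({\cal H})=(2n-1)\binom{2n-2}{n-1}$, a double-count by common vertex shows that each of the $2n-1$ vertices $v'\in S$ must serve as the common vertex of exactly $\binom{2n-2}{n-1}$ ordered simple pairs, all with union $S$. Applying the local condition to each such $v'$ yields all $n$-subsets of $S$ through $v'$ as edges, and ranging over $v'\in S$ covers every $n$-subset of $S$, so ${\cal H}$ contains a copy of $K^n_{2n-1}$ on $S$. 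The hardest step will be extracting from Bollob\'as's rigidity the precise local statement above---i.e.\ identifying the correct Bollob\'as family built from $\pi$ and translating its extremal configuration into a statement about edges of ${\cal H}$---and then verifying that the per-vertex counts really do distribute uniformly over the $2n-1$ vertices of $S$, so that no simple pair of ${\cal H}$ ``leaks'' out of the chosen $(2n-1)$-set $S$.
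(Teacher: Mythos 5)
There is a genuine gap, and it sits exactly where you locate ``the hardest step.'' Your outline never specifies the family to which Bollob\'as's theorem is applied, and the natural reading---take $A=X\setminus\{v\}$, $B=Y\setminus\{v\}$ over all ordered simple pairs---cannot work: the resulting sum would be $m_2(\mathcal{H})/\binom{2n-2}{n-1}=2n-1>1$, so the cross-intersecting hypothesis of the theorem must fail for that family (and it visibly does, e.g.\ for two pairs sharing the same second edge). Relatedly, the rigidity statement you invoke (``all ordered disjoint $(n-1)$-subsets of a single $(2n-2)$-element ground set'') is not the equality case of the inequality in the form needed here, which forces a \emph{common} set $B$ and $A_i$ ranging over all $q$-subsets of its complement. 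The paper's actual construction is different and this difference is essential: for each simple pair $s=(X,Y)$ it sets $A_s=X\setminus Y$ and $B_s=V\setminus(X\cup Y)$ (the complement of the union, not $Y\setminus\{v\}$), and it applies Lemma \ref{lem:bol} only to a subfamily $M(\mathcal{H})$ containing one pair per second edge $Y$. Verifying the cross-condition $A_j\nsubseteq A_i\cup B_i$ for this family is the real content, and it rests on two structural facts proved via the coloring algorithm that are entirely absent from your plan: every ordering separates at most one simple pair (an averaging argument using that the expected number of separated pairs is exactly $1$), and no two simple pairs share both their second edge and their intersection vertex (Claims \ref{claim: separates at most one pair} and \ref{claim: no two edges intersect same vertex}). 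Once the cross-condition is verified, the equality case of Lemma \ref{lem:bol} directly yields a single $(2n-1)$-set $U$ whose $n$-subsets are all edges; no local-to-global step is required.

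Your proposed local-to-global step also does not close. The ``local condition'' at a single vertex $v$ produces $\binom{2n-2}{n-1}$ ordered simple pairs with common vertex $v$, which is only a $1/(2n-1)$ fraction of $m_2(\mathcal{H})$; nothing in your count forbids the remaining pairs from having common vertices or unions outside $S$. Worse, the local condition at $v$ cannot by itself supply a simple pair with common vertex $v'\neq v$: any two of the edges it produces both contain $v$ and lie in the $(2n-1)$-set $S$, so their intersection contains $v$ and a simple pair among them must have common vertex $v$. Hence ``applying the local condition to each $v'\in S$'' presupposes exactly the uniform distribution of pairs over $S$ that you are trying to establish; the argument is circular at that point. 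To repair the proof you would need to abandon the per-vertex bootstrapping and instead set up the single global Bollob\'as family as the paper does, together with the two permutation-separation claims that make its hypotheses checkable.
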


While the proof of Proposition \ref{prop:main} is implicit in Pluhar's \cite{Pluhar} argument for bounding $m(n)$,
the proof of Theorem \ref{thm:main} is more intricate,
relying on Bollob\'as's two families theorem \cite{Bo} as well as on a refined analysis of Pluhar's randomized algorithm for $2$-coloring $n$-graphs.

\section{Proof of Proposition \ref{prop:main}}

In this section we describe several preliminary observations regarding a coloring algorithm introduced in~\cite{Pluhar}, and use them to derive Proposition \ref{prop:main}. The algorithm is the following:



\bigskip

\noindent{\bf Algorithm }$\text{Col}({\cal H},\pi)$. The input is a hypergraph ${\cal H}=(V,E)$ and an ordering $\pi:V \mapsto \{1,\ldots,|V|\} $ (that is, $\pi$ is a bijection). The output is a $2$-coloring of $V$ (not necessarily a proper one). The algorithm runs in $|V|$ steps, where in each time step $1\leq i\leq |V|$, the vertex $\pi^{-1}(i)$ is being colored $Blue$ if this does not form any monochromatic $Blue$ edge. Otherwise, $\pi^{-1}(i)$ is colored $Red$.

\bigskip

We now state an important property of $\text{Col}({\cal H},\pi)$.
For two disjoint subsets $X,Y\subseteq V$, we use the notation $\pi(X)<\pi(Y)$ whenever $\max_{x \in X}\pi(x)<\min_{y \in Y}\pi(y)$, that is, the elements of $X$ precede all the elements of $Y$ in the ordering $\pi$.
Suppose $(X,Y)$ is a simple pair of edges in ${\cal H}$ with\footnote{Here, and in what follows, we slightly abuse notation by writing $y$ instead of the more appropriate $\{y\}$.} $X\cap Y=y$.
We say that $\pi$ {\em separates} $(X,Y)$ if $\pi(X\setminus y)<\pi(y)<\pi(Y\setminus y)$.

\begin{claim}\label{claim1}
If $\text{Col}({\cal H},\pi)$ fails to properly color ${\cal H}$ then $\pi$ separates at least one pair of simple edges.
\end{claim}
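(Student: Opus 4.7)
The plan is to exploit the asymmetry built into the algorithm: by construction no Blue monochromatic edge can ever be created (since a vertex is colored Blue only when doing so causes no Blue edge), so the only way $\text{Col}(\mathcal{H},\pi)$ can fail is by producing a monochromatic \emph{Red} edge $Y$. I would begin the proof by noting this and fixing such a $Y$.

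Next, I would focus on the right distinguished vertex of $Y$. The natural temptation is to take the last vertex of $Y$ in the ordering (since that is the moment $Y$ becomes entirely Red), but this yields the wrong inequalities for the separation condition $\pi(X\setminus y)<\pi(y)<\pi(Y\setminus y)$. Instead I would let $y$ be the \emph{first} vertex of $Y$ under $\pi$. Since $Y$ is eventually all Red, in particular $y$ was colored Red, and the algorithm does this only when assigning $y$ the color Blue would complete a monochromatic Blue edge. Let $X$ be such an edge; then $y\in X$ and every vertex of $X\setminus y$ was already Blue at the moment $y$ was processed, so $\pi(X\setminus y)<\pi(y)$. On the other hand, $y$ is the $\pi$-minimum of $Y$, hence $\pi(y)<\pi(Y\setminus y)$.

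It then remains to verify that $(X,Y)$ is actually a simple pair, i.e.\ $X\cap Y=\{y\}$. This is where one must be slightly careful, but it is essentially immediate: a vertex in $X\setminus y$ was Blue at step $\pi(y)$, while every vertex of $Y\setminus y$ is Red in the final coloring; since the algorithm never recolors a vertex, these two sets are disjoint. Combining everything, $(X,Y)$ is a simple pair with common vertex $y$ that is separated by $\pi$, proving the claim.

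The only real obstacle is the choice of $y$ — taking the $\pi$-last vertex of $Y$ gives the wrong side of the separating inequality, so one must take the $\pi$-first vertex and then read off the positions of $X\setminus y$ (colored before $y$) and $Y\setminus y$ (colored after $y$) from the run of the algorithm. Everything else is essentially bookkeeping on the definition of $\text{Col}(\mathcal{H},\pi)$.
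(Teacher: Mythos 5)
Your proof is correct and follows essentially the same route as the paper's: note that no Blue edge can be created, take a monochromatic Red edge $Y$, let $y$ be its $\pi$-first vertex, and extract the Blue edge $X$ that forced $y$ to be Red. Your extra verification that $X\cap Y=\{y\}$ (via the disjointness of the Blue set $X\setminus y$ and the Red set $Y\setminus y$) is a detail the paper leaves implicit, but the argument is the same.
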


\begin{proof}
We first observe that (by definition) for every ordering $\pi$, the algorithm  $\text{Col}(H,\pi)$ does not produce monochromatic $Blue$ edges.
Suppose then it produced a $Red$ edge $Y\in E$. Let $y$ be the first vertex of $Y$ according to the ordering $\pi$.
If $y$ was colored red, then there must have been an edge $X$ so that $y\in X$, and all other vertices of $X$ were already colored $Blue$ (otherwise the algorithm would color $y$ $Blue$). This means $(X,Y)$ is simple and that $\pi$ separates it.
\end{proof}


Note that the claim above already shows that if ${\cal H}$ is not 2-colorable then $m_2({\cal H})>0$.
For the proof of Proposition \ref{prop:main} we will also need the following simple fact.

\begin{claim}\label{claim0}
A random permutation separates any given simple pair with probability $1/n\binom{2n-1}{n}$.
\end{claim}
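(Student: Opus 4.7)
The plan is direct: reduce to a uniform-order calculation on the $2n-1$ vertices of $X\cup Y$. Since $|X|=|Y|=n$ and $|X\cap Y|=\{y\}$, we have $|X\cup Y|=2n-1$. The separation condition $\pi(X\setminus y)<\pi(y)<\pi(Y\setminus y)$ depends only on how $\pi$ orders these $2n-1$ vertices, not on the positions of vertices outside $X\cup Y$. When $\pi$ is a uniformly random permutation of $V$, the restriction of $\pi$ to $X\cup Y$ induces a uniformly random linear order on this set, i.e.\ each of the $(2n-1)!$ orderings is equally likely.

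Next, I would count the favorable orderings. Separation requires $y$ to be the $n$-th element of the induced order, the first $n-1$ positions to be occupied by the $n-1$ elements of $X\setminus y$ (in any order), and the last $n-1$ positions by the elements of $Y\setminus y$ (in any order). This gives exactly $(n-1)!\cdot(n-1)!$ favorable orderings, so the probability is $((n-1)!)^2/(2n-1)!$. Using the identity
\[
n\binom{2n-1}{n} \;=\; n\cdot\frac{(2n-1)!}{n!\,(n-1)!} \;=\; \frac{(2n-1)!}{((n-1)!)^2},
\]
this probability equals $1/(n\binom{2n-1}{n})$, as required. There is no real obstacle; the only point one must check carefully is that the separation event depends only on the induced order on $X\cup Y$, which is immediate from the definition.
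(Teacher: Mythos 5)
Your proof is correct and follows the same route as the paper: restrict attention to the $2n-1$ vertices of $X\cup Y$, count the $((n-1)!)^2$ favorable induced orderings, and simplify $((n-1)!)^2/(2n-1)!$ to $1/\bigl(n\binom{2n-1}{n}\bigr)$. The paper's version is just terser, omitting the explicit justification that the event depends only on the induced order on $X\cup Y$.
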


\begin{proof}
Let $(X,Y)$ be a simple pair, and let $X\cap Y=y$. A permutation $\pi$ separates $(X,Y)$ if and only if $\pi(X\setminus y)<\pi(y)<\pi(Y\setminus y)$,
and this happens with probability exactly
$$\frac{(n-1)!(n-1)!}{(2n-1)!}=\frac{1}{n\binom{2n-1}{n}}$$
as desired.
\end{proof}





The above claims suffice for proving Proposition \ref{prop:main}.



\begin{proof}[Proof (of Proposition \ref{prop:main}):]
Assume $m_2({\cal H}) < n\binom{2n-1}{n}$. Suppose we pick a uniformly random $\pi$.
Then by the union bound and Claim \ref{claim0}, we infer that with positive probability $\pi$ does not separate any simple pair edges.
Hence, there is a $\pi$ not separating any simple pair. Claim \ref{claim1} then implies that $\text{Col}({\cal H},\pi)$ will produce a legal $2$-coloring of ${\cal H}$.
\end{proof}


\section{Proof of Theorem \ref{thm:main}}

For the rest of this section fix some non $2$-colorable $n$-graph ${\cal H}=(V,E)$ satisfying $m_2({\cal H})=n\binom{2n-1}{n}$.
We need to show that ${\cal H}$ contains a copy of $K^n_{2n-1}$. We start with a few preliminary claims regarding ${\cal H}$.

First, we show that no $\pi$ separates more than one simple pair.
\begin{claim}
  \label{claim: separates at most one pair}
  Every ordering $\pi$ separates at most one simple pair.
\end{claim}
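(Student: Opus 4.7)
The plan is to argue via a first-moment / pigeonhole computation that exploits the exact equality $m_2(\mathcal{H}) = n\binom{2n-1}{n}$.

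Let $X(\pi)$ denote the number of ordered simple pairs of edges separated by the permutation $\pi$. By Claim \ref{claim0}, each specific simple pair is separated by a uniformly random $\pi$ with probability $1/\bigl(n\binom{2n-1}{n}\bigr)$, so by linearity of expectation
\[
\Ex[X] \;=\; \frac{m_2(\mathcal{H})}{n\binom{2n-1}{n}} \;=\; 1.
\]
On the other hand, by Claim \ref{claim1}, for \emph{every} ordering $\pi$, the algorithm $\text{Col}(\mathcal{H},\pi)$ fails to properly $2$-color $\mathcal{H}$ (since $\mathcal{H}$ is not $2$-colorable), hence $X(\pi) \ge 1$.

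Thus $X \ge 1$ pointwise while $\Ex[X] = 1$, so $\Ex[X-1] = 0$ with $X-1 \ge 0$. Since $X$ is a discrete random variable and every permutation $\pi$ has positive probability under the uniform distribution on orderings of $V$, this forces $X(\pi) = 1$ for every $\pi$. In particular, no $\pi$ separates more than one simple pair, proving the claim.

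There is essentially no obstacle here: the whole argument is just the observation that if a non-negative integer random variable has mean $1$ and is everywhere at least $1$, then it is identically $1$. The only subtlety is to be careful that the probability space is the uniform distribution on all $|V|!$ orderings, so that pointwise conclusions follow from almost-sure ones.
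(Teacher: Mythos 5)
Your proof is correct and is essentially the paper's own argument: both compute that the expected number of separated simple pairs under a uniformly random ordering is exactly $1$ (via Claim \ref{claim0} and the hypothesis $m_2(\mathcal{H})=n\binom{2n-1}{n}$), and both use Claim \ref{claim1} together with non-$2$-colorability to rule out an ordering separating zero pairs. The paper phrases this as a contradiction (an ordering separating two pairs would force another separating none), while you phrase it as ``pointwise $\ge 1$ with mean $1$ implies identically $1$''; this is the same first-moment argument.
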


\begin{proof}
Suppose $\pi$ separates two simple pairs. By Claim \ref{claim0}, the assumption on $m_2(\cal H)$, and by linearity of expectation, the expected number of simple pairs separated by a random permutation is exactly $1$. Hence, if $\pi$ separates $2$ simple pairs, then there must exist a permutation $\sigma$ which separates less than $1$, and therefore $0$, simple pairs. Therefore, by Claim \ref{claim1} we obtain that $\text{Col}({\cal H},\sigma)$ produces a legal $2$-coloring of ${\cal H}$, which is a contradiction to the assumption that $\mathcal H$ is not $2$-colorable.
\end{proof}

\begin{claim}\label{claim: no two edges intersect same vertex}
If $(X,Y)$ and $(X',Y)$ are simple pairs, then $X \cap Y \neq X' \cap Y$.
\end{claim}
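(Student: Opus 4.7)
The natural plan is to argue by contradiction: if two simple pairs $(X,Y)$ and $(X',Y)$ (with $X\neq X'$) satisfy $X\cap Y=X'\cap Y=\{y\}$, then I will build a single ordering $\pi$ that separates both pairs simultaneously, violating Claim \ref{claim: separates at most one pair}.

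The construction of $\pi$ is straightforward. To separate $(X,Y)$ we need $\pi\bigl((X\cup X')\setminus\{y\}\bigr)<\pi(y)$ (in particular $\pi(X\setminus y)<\pi(y)$), and for $(X',Y)$ we need $\pi(X'\setminus y)<\pi(y)$, and both pairs require $\pi(y)<\pi(Y\setminus y)$. So it suffices to place all vertices of $(X\cup X')\setminus\{y\}$ first, then $y$, then all vertices of $Y\setminus\{y\}$, then the remaining vertices of $V$ in any order. The key feasibility check is that the two blocks $(X\cup X')\setminus\{y\}$ and $Y\setminus\{y\}$ are disjoint; this follows at once from the assumption $X\cap Y=X'\cap Y=\{y\}$, which gives $(X\setminus y)\cap Y=\emptyset$ and $(X'\setminus y)\cap Y=\emptyset$.

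With such a $\pi$ chosen, both $(X,Y)$ and $(X',Y)$ are separated by $\pi$, contradicting Claim \ref{claim: separates at most one pair}. Hence the assumption $X\cap Y=X'\cap Y$ is untenable and the claim follows.

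I do not foresee a real obstacle: the whole argument is a one-line reduction to the previous claim, whose only content is checking that the two blocks of vertices we wish to order are disjoint, which is immediate from the hypothesis that both pairs meet $Y$ at the \emph{same} vertex $y$.
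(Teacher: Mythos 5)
Your proposal is correct and follows essentially the same route as the paper: both argue that if $X\cap Y=X'\cap Y=\{y\}$ then $(X\cup X')\setminus\{y\}$ is disjoint from $Y\setminus\{y\}$, so an ordering placing $(X\cup X')\setminus\{y\}$ before $y$ before $Y\setminus\{y\}$ separates both pairs, contradicting Claim~\ref{claim: separates at most one pair}. No substantive difference.
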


\begin{proof}
We observe that if $X \cap Y = X' \cap Y=y$, then there is a $\pi$ that separates both $(X,Y)$ and $(X',Y)$, and this will contradict Claim \ref{claim: separates at most one pair}. Indeed, if $(X,Y)$ and $(X',Y)$ are simple pairs and $X\cap Y=X'\cap Y=y$,  then $(X\cup X')\setminus y$ and $Y$ are disjoint. Therefore,
any $\pi$ satisfying
$$
\pi((X\cup X')\setminus y)<\pi(y)<\pi(Y \setminus y)
$$
separates $(X,Y)$ and $(X',Y)$. This completes the proof.
\end{proof}

In addition to the above observations about ${\cal H}$, the last ingredient we will need is the following theorem of Bollob\'as \cite{Bo}.

\begin{lemma}\label{lem:bol}
Let $I$ be an index set. For all $i\in I$, let $A_i$ and $B_i$ be subsets of a set $V$ of $p$ elements satisfying the following conditions:
\begin{enumerate}
  \item $A_i\cap B_i=\emptyset$ for all $i\in I$, and
  \item $A_j\nsubseteq A_i\cup B_i$ for all $i\neq j\in I$.
\end{enumerate}
Then, we have
$$\sum_{i\in I}\frac{1}{\binom{p-|B_i|}{|A_i|}}\leq 1,$$
with equality if and only if $B_i=B$ for all $i\in I$ and the sets $A_i$ are all the $q$-tuples of the set $P\setminus B$ for some value of $q$.
\end{lemma}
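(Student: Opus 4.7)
The plan is to prove the inequality by a random-permutation argument of the type Bollob\'as originally used for his two-families theorem. Pick a uniformly random bijection $\pi:V\to\{1,\ldots,p\}$, and for each $i\in I$ define the event
$$
E_i \;=\; \bigl\{\, \pi(a) < \pi(v) \text{ for every } a\in A_i \text{ and every } v\in V\setminus(A_i\cup B_i)\,\bigr\}.
$$
Equivalently, $E_i$ holds iff after deleting $B_i$ from the sequence $\pi^{-1}(1),\ldots,\pi^{-1}(p)$, the first $|A_i|$ surviving entries form precisely the set $A_i$. Since this event depends only on the relative order of the $p-|B_i|$ elements of $V\setminus B_i$, a direct count gives
$$
\Pr[E_i] \;=\; \frac{1}{\binom{p-|B_i|}{|A_i|}}.
$$

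The key step is to show that the family $\{E_i\}_{i\in I}$ is pairwise disjoint. Suppose for contradiction that $\pi\in E_i\cap E_j$ for some $i\neq j$. Condition 2, applied with the roles $(i,j)$ and with the roles $(j,i)$, produces elements
$$
a^*\in A_j\setminus(A_i\cup B_i) \qquad\text{and}\qquad a^{**}\in A_i\setminus(A_j\cup B_j).
$$
Applying the definition of $E_i$ to $a^{**}\in A_i$ and $a^*\in V\setminus(A_i\cup B_i)$ yields $\pi(a^{**})<\pi(a^*)$; applying the definition of $E_j$ to $a^*\in A_j$ and $a^{**}\in V\setminus(A_j\cup B_j)$ yields $\pi(a^*)<\pi(a^{**})$, a contradiction. (Note that condition 2 automatically excludes the degenerate case $A_i=\emptyset$ whenever $|I|\geq 2$, so both $a^*$ and $a^{**}$ really exist.) Summing probabilities over the disjoint events now gives
$$
\sum_{i\in I}\frac{1}{\binom{p-|B_i|}{|A_i|}} \;=\; \sum_{i\in I}\Pr[E_i] \;=\; \Pr\!\Bigl[\bigcup_{i\in I}E_i\Bigr] \;\leq\; 1,
$$
which is the asserted inequality.

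For the equality characterization, equality forces $\bigcup_{i\in I}E_i$ to be the entire probability space, i.e., every permutation $\pi$ of $V$ must lie in exactly one $E_i$. The \emph{easy} direction --- that taking $B_i=B$ constant with $\{A_i\}_{i\in I}$ equal to the collection of all $q$-subsets of $V\setminus B$ achieves equality --- is immediate: for any $\pi$, the first $q$ entries of $\pi$ restricted to $V\setminus B$ determine a unique such $A_i$. The main obstacle I expect lies in the reverse direction. I would attack it in two stages. First, I would show that all the $B_i$'s must coincide: given two indices $i,j$ with, say, $B_i\not\subseteq B_j$, one can construct a permutation $\pi$ placing the elements of $B_i\triangle B_j$ in a position that makes the ``initial segment outside $B_i$'' and the ``initial segment outside $B_j$'' inconsistent with simultaneously having $\pi\in E_i$ or $\pi\in E_j$, contradicting the cover-everything property. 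Once $B_i\equiv B$, the event $E_i$ depends only on the ordering $\pi$ restricted to $V\setminus B$, and requiring that every such ordering be covered exactly once by some $A_i$ forces, by an adjacent-transposition / initial-segment argument, the $|A_i|$'s to share a common value $q$ and the family $\{A_i\}$ to be precisely all $q$-subsets of $V\setminus B$.
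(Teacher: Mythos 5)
The paper never proves this lemma --- it is quoted verbatim as Bollob\'as's theorem and used as a black box (citing \cite{Bo}) --- so there is no in-paper proof to compare against; I am judging your argument on its own. The first half is correct: the events $E_i$ are the right ones for this ``skew'' form of the two-families theorem, the computation $\Pr[E_i]=1/\binom{p-|B_i|}{|A_i|}$ is right, and the pairwise-disjointness argument via the two witnesses $a^*\in A_j\setminus(A_i\cup B_i)$ and $a^{**}\in A_i\setminus(A_j\cup B_j)$ is exactly the standard probabilistic proof of the inequality, including the correct remark that condition~2 rules out empty $A_i$ when $|I|\geq 2$.

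The genuine gap is in the equality characterization, which is precisely the part of the lemma the paper leans on (it is what forces the extremal hypergraph to contain $K^n_{2n-1}$). Your ``only if'' direction is a plan, not a proof, and stage one of the plan is flawed as stated: to show $B_i=B_j$ you propose exhibiting a permutation $\pi$ that is inconsistent with lying in $E_i$ or in $E_j$, but that yields no contradiction --- the covering property only guarantees that $\pi$ lies in $E_k$ for \emph{some} $k$, possibly with $k\notin\{i,j\}$, so a pairwise inconsistency between two chosen indices proves nothing. A correct argument must control all the events simultaneously, for instance by tracking how the unique index $i(\pi)$ changes under an adjacent transposition of $\pi$ and showing that these transitions force all the $|A_i|$ to share a common value $q$ and all the $B_i$ to coincide; you gesture at such an argument in stage two (``adjacent-transposition / initial-segment argument'') but never carry it out. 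So the inequality is proved, the easy direction of the equality statement is proved, but the direction the paper actually needs is missing.
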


Let us now show how to use Lemma \ref{lem:bol} in order to derive Theorem \ref{thm:main}. Recall that $V$ is the vertex set of ${\cal H}$ and set $p:=|V|$.
Let $M(\mathcal H)$ be a collection of simple pairs $(X,Y)$ defined as follows; out of all the simple pairs $(X,Y)$ with the same ``second'' set $Y$, put
in $M(\mathcal H)$ one of these pairs. Observe that by Claim \ref{claim: no two edges intersect same vertex} each $Y$ belongs to at most $|Y|=n$ simple pairs of the form $(X,Y)$ (i.e, with $Y$ as the second set), implying that $t:=|M(\mathcal H)|\geq \frac{1}{n} \cdot m_2(\mathcal H)= \binom{2n-1}{n}$.
We now define a collection $\mathcal F$ consisting of pairs of subsets of $V$ as follows: For every simple pair $s:=(X,Y)\in M(\mathcal H)$, define $A_s=X\setminus Y$ and $B_s=V\setminus (X\cup Y)$, and let $\mathcal F=\{(A_s,B_s): \text{ }s\in M(\mathcal H)\}$. For convenience, let
us rename the pairs in ${\cal F}$ as $(A_i,B_i)$ with $1 \leq i \leq t$.



Now we wish to show that $\mathcal F$ satisfies the conditions in Lemma \ref{lem:bol}. Observe that if it does, then since
$$\sum_{i=1}^t\frac{1}{\binom{p-|B_i|}{|A_i|}}=\sum_{i=1}^t \frac{1}{\binom{2n-1}{n-1}}\geq 1,$$
it follows by the first part of Lemma \ref{lem:bol} that the last inequality is in fact an equality. Therefore, by the second part of Lemma \ref{lem:bol}, we  conclude that all the $B_i$'s are the same set $B$, and the set of all the $A_i$'s consists of all $n-1$ subsets of a ground set of size $2n-1$. That is, let $B=B_i$ and $U=V\setminus B$. Then we have that $|U|=2n-1$, and that the sets $A_i$ are all the $n-1$ subsets of $U$. Since by construction we have that $U\setminus A_i\in E(\mathcal H)$ for all $i$, we conclude that $\mathcal H$ restricted to the set $U$ is a copy of $K^n_{2n-1}$ as desired. It thus remains to show the following:

\begin{claim}
$\mathcal F$ satisfies the conditions in Lemma \ref{lem:bol}
\end{claim}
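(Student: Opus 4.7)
The plan is to verify the two conditions of Lemma \ref{lem:bol} for the family $\mathcal F$. Condition (i) is immediate, since $A_s = X_s \setminus Y_s \subseteq X_s$ while $B_s = V \setminus (X_s \cup Y_s)$ is disjoint from $X_s$. For condition (ii), observe that $A_i \cup B_i = V \setminus Y_i$, so the condition $A_j \not\subseteq A_i \cup B_i$ is equivalent to $(X_j \setminus Y_j) \cap Y_i \neq \emptyset$, i.e.\ $X_j \cap Y_i \not\subseteq \{y_j\}$, where $y_j$ denotes the unique vertex of $X_j \cap Y_j$. I would argue this by contradiction: suppose $X_j \cap Y_i \subseteq \{y_j\}$ for some $i \neq j$ in $M(\mathcal H)$, so that $X_j \cap Y_i$ equals either $\{y_j\}$ or $\emptyset$, and in each case exhibit a permutation $\pi$ of $V$ separating two distinct simple pairs of $\mathcal H$, contradicting Claim \ref{claim: separates at most one pair}.

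If $X_j \cap Y_i = \{y_j\}$, the key observation is that $(X_j, Y_i)$ is itself a simple pair of $\mathcal H$, distinct from $(X_j, Y_j)$ because $Y_i \neq Y_j$ (the second coordinates of pairs in $M(\mathcal H)$ are all distinct by construction). Both pairs meet $X_j$ at $\{y_j\}$, and the equalities $X_j \cap Y_i = X_j \cap Y_j = \{y_j\}$ make $X_j \setminus y_j$ disjoint from $(Y_i \cup Y_j)\setminus y_j$; hence any $\pi$ ordering $X_j \setminus y_j$ first, then $y_j$, then $(Y_i \cup Y_j) \setminus y_j$, and finally the remaining vertices, separates both pairs simultaneously.

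If instead $X_j \cap Y_i = \emptyset$, I would seek a $\pi$ separating the two original pairs $(X_i, Y_i)$ and $(X_j, Y_j)$. Writing $L_k = X_k \setminus y_k$ and $R_k = Y_k \setminus y_k$ for $k \in \{i,j\}$, the separation requirements amount to demanding $L_i < y_i < R_i$ and $L_j < y_j < R_j$ in the associated partial order, and the main work is to verify that this poset is acyclic. The assumption $X_j \cap Y_i = \emptyset$ forces $L_j \cap R_i = \emptyset$, $y_i \notin L_j$, and $y_j \notin R_i$, which are exactly the relations that could have forced $y_i < y_j$; every induced relation between $y_i$ and $y_j$ that remains (coming from a vertex in $L_i \cap R_j$, from $y_i \in R_j$, or from $y_j \in L_i$) pushes in the single direction $y_j < y_i$. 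A short case check on the resulting directed graph confirms that no cycle can arise, so a linear extension exists and yields the desired $\pi$. I expect this second case, where the ordering must be assembled from the partial order rather than written down explicitly, to be the main point needing care.
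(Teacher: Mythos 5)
Your proof is correct and takes essentially the same route as the paper: the same reduction of condition (ii) to $(X_j\setminus y_j)\cap Y_i\neq\emptyset$, the same case split on whether $X_j\cap Y_i$ equals $\{y_j\}$ or $\emptyset$, and in each case a permutation separating two distinct simple pairs, contradicting Claim \ref{claim: separates at most one pair}. The only cosmetic differences are that the paper disposes of the first case by citing Claim \ref{claim: no two edges intersect same vertex} (whose proof is exactly your construction), and in the second case writes down an explicit five-block ordering rather than arguing via acyclicity of the precedence constraints.
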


\begin{proof}
The first condition $A_i\cap B_i=\emptyset$ for all $i$ is trivially satisfied by construction. For the second condition, let $(A,B)$ and $(A',B')$ be two elements in $\mathcal F$ coming from simple pairs $(X,Y)$ and $(X',Y')$ belonging to $M({\cal H})$, respectively. Recall that by the way we defined $M(\mathcal H)$ and $\mathcal F$ we have $Y\neq Y'$. Let us use $y$ and $y'$ to denote the unique elements in $X\cap Y$ and $X'\cap Y'$, respectively.
We wish to show that $A\nsubseteq A'\cup B'$, which, by construction, is implied by $(X\setminus y)\cap Y'\neq \emptyset$. Assuming $(X\setminus y)\cap Y'=\emptyset$, we will derive a contradiction to Claim \ref{claim: separates at most one pair} by showing that there is a permutation $\pi$ separating two distinct simple pairs.

Observe that it cannot be that $y\in Y'$. Indeed, if it was the case, then together with the assumption that $(X\setminus y)\cap Y'=\emptyset$ we would infer
that $(X,Y)$ and $(X,Y')$ are both simple pairs intersecting at $y$ (and distinct as $Y\neq Y'$), contradicting Claim \ref{claim: no two edges intersect same vertex}.
Assume then that $y\not \in Y'$ (so in particular $y \neq y'$). We claim that we can find a $\pi$ satisfying
$$
\pi(X\setminus y)<\pi(y)< \pi((X'\setminus y')\setminus X)<\pi(y')<\pi((Y\cup Y')\setminus (X \cup X')).
$$
Indeed, the only thing that needs to be justified is the ability to place $y'$ as above, which follows from the fact that $y' \in Y'$ and the
assumption $(X\setminus y)\cap Y'=\emptyset$ which together imply that $y' \not \in X$. Observe that since $\pi$ first places $X\setminus y$ and then $y$, the pair $(X,Y)$ is separated by $\pi$. Such a $\pi$ clearly places $X' \setminus y'$ before $y'$ and the assumption $(X\setminus y)\cap Y'=\emptyset$ together with the fact that $y \not \in Y'$ imply that such a $\pi$ places all of $Y' \setminus y'$ after $y'$, so it
separates $(X',Y')$ as well, giving us the desired contradiction.
\end{proof}

This completes the proof of Theorem \ref{thm:main}.

%
%
%




\begin{thebibliography}{99}

\bibitem{AS}
N. Alon and J. Spencer, {\bf The Probabilistic Method}, Wiley, New York, 1992.

\bibitem{B}
F. Bernstein, Zur theorie der trigonometrische Reihen, Leipz. Ber. 60 (1908), 325--328.

\bibitem{Bo}
B. Bollob\'as,
On generalized graphs, Acta Math. Acad. Sci. Hungar. 16 (1965), 447--452.

\bibitem{CK}
D. Cherkashin and J. Kozik,
A note on random greedy coloring of uniform hypergraphs,
Random Structures Algorithms 47 (2015), 407--413.

\bibitem{L}
L. Lov\'asz, {\bf Combinatorial Problems and Exercises}, North Holland, Amsterdam, 1979.

\bibitem{M}
E. W. Miller, On a property of families of sets, Comp. Rend. Varsovie 30 (1937), 31--38.

\bibitem{Pluhar}
A. Pluhar,
Greedy colorings of uniform hypergraphs, Random Structures Algorithms 35 (2009), 216--221.

\bibitem{S}
P. Seymour, On the two-colouring of hypergraphs, Quart. J. Math. Oxford 25 (1974), 303--312.


\end{thebibliography}
\end{document}